\documentclass[12pt,reqno]{amsart}

\usepackage[arrow,matrix,curve]{xy}

\usepackage[dvips]{graphicx} 

\usepackage{amssymb, latexsym, amsmath, amscd, array,
%
%
}

\usepackage{amsfonts}     
\usepackage{tikz}     
\usepackage{scalefnt} 

\newtheorem{theorem}{Theorem}[section]
\newtheorem{lemma}[theorem]{Lemma}

\theoremstyle{definition}


\numberwithin{equation}{section}

\newcommand\N {{\mathbb N}} 

\newcommand\R {{\mathbb R}}
\newcommand\Q {{\mathbb Q}}
\newcommand\Z {{\mathbb Z}}

\newcommand\RRR{\mbox{I\!I\!R}}

\newcommand\NNN{\mbox{I\!I\!I\!\!N}}

\newcommand\Los{{\L}o{\'s}}

\begin{document}

\thispagestyle{empty}

\title [Toward a clarity of the extreme value theorem] {Toward a
clarity of the extreme value theorem}

\author{Karin U. Katz}

\author{Mikhail G. Katz}

\address{K. Katz, M. Katz, Department of Mathematics, Bar Ilan
University, Ramat Gan 52900 Israel} \email{katzmik@macs.biu.ac.il}

\author{Taras Kudryk}

\address{T. Kudryk, Department of Mathematics, Lviv National
University, Lviv, Ukraine}

\email{kudryk@mail.lviv.ua}

\subjclass[2000]{
Primary 26E35; 
00A30,         
01A85,         
03F55
}

\keywords{Benacerraf, Bishop, Cauchy, constructive analysis,
continuity, extreme value theorem, grades of clarity, hyperreal,
infinitesimal, Kaestner, Kronecker, law of excluded middle, ontology,
Peirce, principle of unique choice, procedure, trichotomy, uniqueness
paradigm}

\bigskip\bigskip\bigskip\noindent
\begin{abstract}
We apply a framework developed by C.~S.~Peirce to analyze the concept
of \emph{clarity}, so as to examine a pair of rival mathematical
approaches to a typical result in analysis.  Namely, we compare an
intuitionist and an infinitesimal approaches to the extreme value
theorem.  We argue that a given pre-mathematical phenomenon may have
several aspects that are not necessarily captured by a single
formalisation, pointing to a complementarity rather than a rivalry of
the approaches.
\end{abstract}

\maketitle

\tableofcontents

\section{Introduction}

German physicist G. Lichtenberg (1742 - 1799) was born less than a
decade after the publication of the cleric George Berkeley's tract
\emph{The Analyst}, and would have certainly been influenced by it or
at least aware of it.  Lichtenberg wrote:
\begin{quote}
The great artifice of regarding small deviations from the truth as
being the truth itself [which the differential calculus is built
upon]%
\footnote{We have reinstated (in brackets) Lichtenberg's comment
concerning the differential calculus, which was inexplicably deleted
by Penguin's translator R. J.~Hollingdale.  The German original reads
as follows: ``Der gro{\ss}e Kunstgriff kleine Abweichungen von der
Wahrheit f\"ur die Wahrheit selbst zu halten, worauf die ganze
Differential Rechnung gebaut ist, ist auch zugleich der Grund unsrer
witzigen Gedanken, wo offt das Ganze hinfallen w\"urde, wenn wir die
Abweichungen in einer philosophischen Strenge nehmen w\"urden.''}
is at the same time the foundation of wit, where the whole thing would
often collapse if we were to regard these deviations in the spirit of
philosophic rigour (Lichtenberg 1765-1770 \cite[Notebook A, 1,
p.~21]{Lich}).
\end{quote}
The ``truth" Lichtenberg is referring to is that one cannot have
\[
(dx\not=0)\wedge(dx=0),
\]
namely infinitesimal errors are strictly speaking not allowed and,
philosophically speaking, equality must be true equality and only true
equality.  The allegation Lichtenberg is reporting as fact is
Berkeley's contention that infinitesimal calculus is based on a
logical iconsistency.  The ``deviation from the truth'' refers to the
so-called infinitesimal error that allegedly makes calculus possible.

\subsection{A historical re-appraisal}

Recently the underlying assumption of Lichtenberg's claim, namely the
inconsistency of the historical infinitesimal calculus, has been
challenged.  On Leibniz, see Knobloch (2002 \cite{Kn02}), (2011
\cite{Kn11}); Katz \& Sherry (2013 \cite{KS1}); Guillaume (2014
\cite{Gu}).  The present text continues the re-appraisal of the
history and philosophy of mathematical analysis undertaken in a number
of recent texts.  On Galileo, see Bascelli (2014 \cite{Ba14}).  On
Fermat, see Katz, Schaps \& Shnider (2013 \cite{KSS13}) and Knobloch
(2014 \cite{Kn14}).  On Euler, see Reeder (2012 \cite{Re}) and Bair et
al.~(2013 \cite{B11}), (2014 \cite{B12}).  On Cauchy, see Borovik \&
Katz (2012 \cite{BK}) and others.  Part of such re-appraisal concerns
the place of Robinson's infinitesimals in the history of analysis.

We will examine two approaches to the extreme value theorem, the
intuitionistic (i.e., relying on intuitionistic logic) and the
hyperreal, from the point of view of Peirce's \emph{three grades of
clarity} (Peirce \cite{Pei}).

It is sometimes thought that Robinson's treatment of infinitesimals
(see \cite{Ro66}) entails excursions into advanced mathematical logic
that may appear as baroque complications of familiar mathematical
ideas.  To a large extent, such baroque complications are unnecessary.
The 1948 ultrapower construction of the hyper-real fields by E.~Hewitt
\cite{Hew}, combined with J.~\Los's theorem \cite{Lo} from 1955 (whose
consequence is the transfer principle), provide a framework where a
large slice of analysis can be treated, in the context of an
infinitesimal-enriched continuum.  Such a viewpoint was elaborated by
Hatcher \cite{Ha82} in an algebraic context and by Lindstr\o{}m
\cite{Li88} in an analytic context (see Section~\ref{nine}).  We seek
to challenge the received wisdom that a modern infinitesimal approach
is necessarily baroque.

Laugwitz authored perceptive historical analyses of the work of Euler
and Cauchy (see e.g., \cite{Lau89}), but was somewhat saddled with the
numerous variants of the~$\Omega$-calculus that he developed jointly
with Schmieden (1958 \cite{SL}).  As a result, he did not fully stress
the simplicity of the ultrapower approach.  Synthetic Differential
Geometry (following Lawvere), featuring nilsquare infinitesimals, is
based on a category-theoretic framework; see Kock \cite{Ko06} and
J.~Bell \cite{Bel08}, \cite{Bel}.

\subsection{Practice and ontology}

To steer clear of presentism in interpreting classical
infinitesimalists like Leibniz and Euler, a crucial distinction to
keep in mind is that between syntax/procedures, on the one hand, and
semantics/ontology, on the other; see Benacerraf (1965 \cite{Be65})
for the dichotomy of practice \emph{vs} ontology.  Euler's inferential
moves involving infinitesimals and infinite numbers find good proxies
in the \emph{procedures} of Robinson's framework.  Meanwhile, the kind
of set-theoretic or type-theoretic ontology necessary to make
Robinson's framework acceptable to a modern rigorous mind is certainly
nowhere to be found in the classical infinitesimalists, any more than
Cantorian sets or Weierstrassian epsilontics.  

What we argue is \emph{not} that Robinson is the necessary logical
conclusion of Leibniz.  Rather, we have argued, with Felix Klein, that
there is a parallel B-track for the development of analysis that is
often underestimated in traditional A-track epsilontist
historiography.  The point is \emph{not} that the B-track leads to
Robinson, but rather than B-track is distinct from A-track; in fact, a
number of modern infinitesimal theories could potentially fit the bill
(for more details on the two tracks see Section~\ref{infi}).

The ultrapower construction is related to the classical construction
of the reals starting from the set~$\Q^\N$ of sequences of rationals,
in the following sense.  In the classical construction, one works with
the subset of~$\Q^\N$ consisting of Cauchy sequences.  Meanwhile, in
the ultrapower construction one works with {\em all\/} sequences (see
Section~\ref{infi}).  The researchers working in Robinson's framework
have often emphasized the need to learn model theory truly to
understand infinitesimals.  There may be some truth in this, but the
point may have been overstated.  A null sequence (i.e., sequence
tending to zero) generates an infinitesimal, just as it did in
Cauchy's \emph{Cours d'Analyse} (Cauchy 1821 \cite{Ca21}).  The link
between Cauchy's infinitesimals and those in the ultrapower
construction was explored by Sad, Teixeira \& Baldino \cite{STB} and
Borovik \& Katz~\cite{BK}.

In the sequel whenever we use the adjective \emph{intuitionistic}, we
refer to mathematical frameworks relying on intuitionistic logic
(rather than any specific system of \emph{Intuitionism} such as
Brouwer's).  Our intuitionistic framework is not BISH but rather that
of Troelstra and van Dalen \cite{TV}.  In a typical system relying on
intuitionistic logic, one finds counterexamples that seem
``paradoxical'' from the classical viewpoint, such as Brouwerian
counterexamples.  Such examples are present in Bishop's framework as
well, though the traditional presentations thereof by Bridges and
others tend to downplay this (see a more detailed discussion in
Section~\ref{four}).  Feferman noted that
\begin{quote}
[Bishop] finesses the whole issue of how one arrives at Brouwer's
theorem [to the effect that every function on a closed interval is
uniformly continuous] by saying that those are the only functions, at
least initially, that one is going to talk about (Feferman 2000
\cite{Fe00}).
\end{quote}
One of the themes of this text is the idea that, inspite of the
incompatibility between the two frameworks for EVT, important insight
about the problem can be gained from both.  Our approach is both
complementary and orthogonal to that of Wattenberg (1988 \cite{Wa}).

\section{Grades of clarity according to Peirce}
\label{one}

Is there anything unclear about the extreme value theorem (EVT)?  Or
rather, as an intuitionist might put it, is there anything
\emph{clear} about the classical formulation of the EVT?

The clarity referred to in our title alludes to the Peircean analysis
of the 3 grades (stages) of clarity in the emergence of a new concept.
The framework was proposed by C. S. Peirce in 1897, in the context of
his analysis of the concept of continuity and continuum, which, as he
felt at the time, is composed of infinitesimal parts (see
\cite[p.~103]{Ha}).  Peirce identified three stages in creating a
novel concept:

\begin{quote}
there are three grades of clearness in our apprehensions of the
meanings of words.  The first consists in the connexion of the word
with familiar experience \ldots The second grade consists in the
abstract definition, depending upon an analysis of just what it is
that makes the word applicable \ldots The third grade of clearness
consists in such a representation of the idea that fruitful reasoning
can be made to turn upon it, and that it can be applied to the
resolution of difficult practical problems (Peirce 1897 \cite{Pei}
cited by Havenel 2008 \cite[p.~87]{Ha}).
\end{quote}
The ``three grades'' can therefore be summarized as follows:
\begin{enumerate}
\item
familiarity through experience;
\item
abstract definition with an eye to future applications;
\item
fruitful reasoning ``made to turn'' upon it, with applications.
\end{enumerate}

The classical EVT asserts that a continuous function on a compact
interval has a maximum.  Attaining clarity in the matter of the EVT is
therefore contingent upon 
\begin{itemize}
\item
clarity in the matter of \emph{continuity}; and
\item
clarity in the matter of \emph{maximum}.
\end{itemize}
We will analyze these issues in Section~\ref{three}.

\section{Perceptual continuity}
\label{three}

Freudenthal \cite{Fr} wrote that Cauchy invented our notion of
continuity.  Yet 60 years earlier, Abraham Gotthelf Kaestner
(1719--1800) had the following to say in this matter:%
\footnote{Translation kindly provided by D.~Spalt.}
\begin{quote}
In a sequence%
\footnote{We have replaced the term {\em series\/} found in Kaestner,
by the term {\em sequence\/}, to conform with modern usage.}
of quantities the increase or the decrease takes place in accordance
with the law of continuity ({\em lege continui\/}), if after each term
of the sequence, the one that follows or preceeds the given term,
differs from it by as little as one wishes, in such a way that the
difference of two consecutive terms may be less than any given
quantity (Kaestner 1760 \cite[paragraph~322, p.~180]{Ka}).
\end{quote}

Similar formulations can be found even earlier in Leibniz.  Kaestner's
formulation of continuity is free of both infinitesimal language
and~$\epsilon, \delta$ terminology, yet it is a concise expression of
a perceptual idea of continuity, at stage 1 of the Peircean ladder.
Kaestner's definition is far from being a mathematically coherent one.
Bolzano and Cauchy retain priority even after this Kaestner is
discovered.  But what is interesting about his definition is its
\emph{local} nature.  Contrary to popular belief, Bolzano and Cauchy
were not the first to envision a local definition of continuity.

Decades later, Cauchy expressed perceptual continuity even more
succinctly, in terms of a function varying by {\em imperceptible
degrees\/} under minute changes of the independent variable (see,
e.g., his letter to Coriolis of 1837 cited by Bottazzini
\cite[p.~107]{Bot86}).

At the perceptual level, the EVT seeks to capture the intuitively
appealing idea of aiming for the ``highest peak on the graph".

Bolzano and Cauchy were the first ones to sense a need to provide
proof of assertions such as the intermediate value theorem (IVT).
Bolzano similarly proved the EVT in 1817 but his manuscript was only
rediscovered in the 1860s.  What kind of proof Cauchy (1789--1857)
might have provided for the EVT we will never know.  The EVT is
usually attributed to Weierstrass who proved it in a course during
1861 (see \cite[p.~18]{Gi} and \cite[p.~25, note~8]{Gi}).

The presence of potentially infinitely many points in the domain of
the function signals an immediate difficulty, residing in the
possibility of the graph rising higher and higher without bound.


One begins to perceive the complexity of the task with the realisation
of an inherent instability of an elusive ``highest point" on the
graph.  Namely, the values at a pair of far-away points may be so
close together as to make the choice difficult.
%
%
A minute perturbation of the function may have an appreciable effect
upon the answer.  We summarize the two difficulties signaled so far:
\begin{itemize}
\item
problem of existence of supremum over infinite domain;
\item
inherent instability of solution due to discontinuous dependence on
data.
\end{itemize}

A perceptual approach to looking for a solution would be to subdivide
the domain by means of a partition
\begin{equation}
\label{21b}
\{x_i\}
\end{equation}
so fine as to challenge the resolution of even the most powerful
physical microscope.  The number of partition points being finite,
there is necessarily one,
\[
x_{i_0},
\]
with the highest value of the function among the partition points.
Continuing the perceptual analysis, one now ``zooms in" on the atom
(or quark, or string) 
\[
c
\]
carrying the partition point~$x_{i_0}$.  Since, by hypothesis, the
function varies by imperceptible degrees, it does not deviate away
from the value~$f(x_{0})$ appreciably, on~$c$.

Of course, if the point is in a visible picture as seen by the human
eye, what looks like the maximum point may be one of several whose
height is visibly the same, but one does not know which of these, at a
higher magnification, will be the actual maximum.  Thus, one may be
zooming in on the wrong place.  Nonetheless, in the perceptual stage
being discussed here, one is {\em only\/} looking for what is visibly
a maximum.  Perceptually speaking, one will never know it's the wrong
place.

It is this phenomenon that makes it difficult to give a perceptual
proof that can be turned into a mathematical proof of the EVT.  If one
draws the curve, one may be able to see the maximum, but a mechanical
process of computing where the maximum is, that works without the
human eye, is more subtle, perhaps even impossible.

How is such a perceptual proof transformed into a mathematical
argument of a higher Peircean grade of clarity?

\section{Constructive clarity}

A constructive presentation of the EVT may be found in the text by
Troelstra and van Dalen \cite[p.~294-295]{TV}.  A majority of the
wider mathematical public is not intimately familiar with this
approach.  Therefore we will recall the main points in some detail.
When a step in a constructive argument is identical with the classical
one, we will preface it by an editorial clause ``as usual''.

The starting point is a challenge to the non-constructive nature of
``existence'' proofs in classical mathematics.  Such proofs generally
go under the name of {\em proof by contradiction\/}.  The main
ingredient in a proof by contradiction is the law of excluded middle
(LEM).

To provide an elementary example, consider the proof of irrationality
of the square root of~$2$, as discussed by E.~Bishop
\cite[p.~18]{Bi85}.  Constructively speaking, being \emph{irrational}
is a stronger property that simply being \emph{not rational}.  Namely,
\emph{irrationality} involves an explicit lower bound for the distance
from any rational (in terms of its denominator).  Thus, for each
rational~$m/n$, the integer~$2n^2$ is divisible by an odd power
of~$2$, while~$m^2$ is divisible by an even power of~$2$.  Hence we
have~$|2n^2-m^2|\geq 1$.  Here we have applied LEM (or more precisely
the law of trichotomy) to an effectively decidable predicate
over~$\Z$.  Since the decimal expansion of~$\sqrt{2}$ starts
with~$1.41\ldots$, we may assume~$\frac{m}{n} \leq 1.5$.  It follows
that
\begin{equation*}
|\sqrt{2} - \tfrac{m}{n}| = \frac{|2n^2 - m^2|}{n^2 \left(
  \sqrt{2}+\tfrac{m}{n} \right)} \geq \frac{1}{n^2
  \left(\sqrt{2}+\tfrac{m}{n}\right)} \geq \frac{1}{3n^2},
\end{equation*}
yielding a numerically meaningful proof of irrationality which avoids
the use of LEM in its classical form (see Section~\ref{25} for a more
advanced discussion).  This is, of course, a special case of
Liouville's theorem on diophantine approximation of algebraic numbers;
see \cite{HW}.

The intuitionist/constructivist challenge to classical mathematics has
traditionally targeted the logical principle expressed by LEM.  An
additional point to keep in mind is a parallel change in the
interpretation of the existential quantifier~$\exists$, which now
takes on a constructive meaning.  Thus, to show that
\[
\exists x : P(x),
\]
one needs to specify an effective procedure for exhibiting such
an~$x$.

We say that~$x$ is the {\em supremum\/} of a set~$X\subset \R$ if and
only if the following condition is satisfied:
\begin{equation*}
(\forall y \in X) \;\; (y\leq x) \; \wedge \; (\forall k\in \N) \;
(\exists y\in X) \left( y> x - 2^{-k} \right).
\end{equation*}
%
As usual, the supremum of a function~$f$ on~$[a,b]$ is the sup of the
set~$\{ f(x): x\in [a,b] \}$.  A set~$X\subset \R$ is {\em totally
bounded\/} if for each~$k\in \N$, there is a finite collection of
points~$x_0, \ldots , x_{n-1} \in X$ such that
\begin{equation*}
(\forall x \in X) \; (\exists i < n) \left( | x - x_i | < 2^{-k}
\right).
\end{equation*}

\begin{lemma}
\label{23}
A totally bounded set~$X\subset \R$ has a supremum.
\end{lemma}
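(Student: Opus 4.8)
The plan is to realise the supremum as the limit of a Cauchy sequence of finite maxima, in the style of the Bishop-type treatment. (Since the second clause in the definition of supremum already forces $X$ to be inhabited, we assume throughout that $X$ is inhabited, so that the approximating finite sets below may be taken nonempty.)

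First I would, for each $n \in \N$, invoke total boundedness to fix a finite list $a^{(n)}_0, \dots, a^{(n)}_{m_n-1} \in X$ such that every $x \in X$ is within $2^{-n-1}$ of some $a^{(n)}_i$, and set $s_n := \max_{i < m_n} a^{(n)}_i$. This is a bona fide constructive real number, the maximum of finitely many reals being a Lipschitz operation that respects Cauchy sequences. Next I would show that $(s_n)_n$ is Cauchy with an explicit modulus. Fix $n \le m$: each $a^{(m)}_i$ lies in $X$, hence within $2^{-n-1}$ of a point of the $n$-th net, which is $\le s_n$; since this bounds every $a^{(m)}_i$ it bounds their maximum, giving $s_m \le s_n + 2^{-n-1}$. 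Symmetrically each $a^{(n)}_i$ is within $2^{-m-1} \le 2^{-n-1}$ of a point $\le s_m$, so $s_n \le s_m + 2^{-n-1}$. Hence $|s_n - s_m| \le 2^{-n-1}$ whenever $m \ge n$, and $(s_n)$ converges to some $x \in \R$ with $|x - s_n| \le 2^{-n-1}$.

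It then remains to verify the two clauses defining $x = \sup X$. For the upper bound: given $y \in X$ and any $n$, the point $y$ is within $2^{-n-1}$ of a net point $\le s_n \le x + 2^{-n-1}$, so $y < x + 2^{-n}$; letting $n$ vary gives $y \le x$. For the approximation clause: given $k$, take $n := k+1$, so that $s_n \ge x - 2^{-n-1} > x - 2^{-n}$. The one genuinely constructive subtlety appears here: we cannot in general decide \emph{which} index realises the maximum $s_n$, but among finitely many reals one can always locate an \emph{approximate} maximiser, i.e.\ an $i$ with $a^{(n)}_i > s_n - 2^{-n}$, using only the decidability of $b < c + \varepsilon \vee b > c$ for reals, applied finitely often. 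Then $y := a^{(n)}_i \in X$ satisfies $y > s_n - 2^{-n} > x - 2^{-n+1} = x - 2^{-k}$, as required.

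The main obstacle is precisely this last point: constructively one may not extract the exact maximiser of a finite set of reals, so the witness demanded by the approximation clause must instead be an approximate maximiser, chosen at a dyadic scale slightly finer than $2^{-k}$. Everything else is careful but routine bookkeeping of the dyadic estimates, and the same computation also shows that such an $x$ is unique.
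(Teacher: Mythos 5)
Your proposal is correct and follows essentially the same route as the paper's proof: take finite $2^{-k}$-nets, form the finite maxima $s_n$, verify that they form a Cauchy sequence with an explicit dyadic modulus, and identify the limit with $\sup X$. The one place you go beyond the paper—locating an \emph{approximate} maximiser via the constructively valid disjunction $b < c+\varepsilon \vee b > c$, since the exact maximiser of finitely many reals cannot be pinned down—is precisely the detail the paper's footnote dismisses with ``one easily shows,'' and your treatment of it is accurate.
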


\begin{proof}
The proof exploits a constructive version of the notion of a Cauchy
sequence.  Namely, one requires~$| x_k - x_{k+n} | < 2^{-k}$ for
every~$n\in \N$ (here~$n$ is independent of~$k$).  For each~$k$ we
specify a corresponding finite collection of points~$x_{k,0}, \ldots ,
x_{k,n-1} \in X$ (with~$n=n(k)$) such that~$(\forall x\in X)\;(\exists
i<n(k))\;\left(|x-x_{k,i}|<2^{-k} \right)$.  Next, let
\[
x_k:= \max \{ x_{k,i} : i < n(k) \}.
\]
It follows that
\begin{equation}
\label{21}
(\forall x \in X) \; (\forall k) \; \left( x - x_k < 2^{-k} \right),
\end{equation}
and we apply formula \eqref{21} to complete the proof.%
\footnote{In more detail, formula \eqref{21} applied to~$x=x_{k+1}$
implies~$x_{k+1}-x_k<2^{-k}$.  Applying the same formula with~$k+1$ in
place of~$k$ to~$x=x_k$ implies~$x_{k}-x_{k+1}<2^{-k-1}$.
Hence~$|x_k-x_{k+1}|<2^{-k}$.  Summing a geometric series we
obtain~$|x_k-x_{k+n}|<2^{-k}+2^{-k-1}+\ldots+2^{-k-n-1}<2^{-k+1}$.
The sequence~$\langle x_n : n\in \N \rangle$ is therefore Cauchy.  One
easily shows that its limit equals~$\sup X$.}
\end{proof}

Compared with the classical version of the EVT, the constructive
version requires a stronger hypothesis of {\em uniform\/} continuity,%
\footnote{Pointwise continuity and uniform continuity on a compact
interval are equivalent with respect to classical logic, but not with
respect to intuitionistic logic.}
and yields a weaker result, namely the existence of~$\sup(f)$ but not
the existence of a maximum.  Of course, the advantage of the
constructive version is a clarification of the nature of
constructively provable results, and may be considered more
``honest''.

\begin{theorem}[Constructive EVT]
\label{24}
Let~$f : [0,1] \to \R$ be uniformly continuous.  Then~$\sup(f)$
exists.
\end{theorem}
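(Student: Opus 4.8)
The plan is to deduce the theorem from Lemma~\ref{23} by showing that the range
$R:=\{f(x):x\in[0,1]\}$ is a totally bounded subset of~$\R$, whence it has a
supremum, which is by definition~$\sup(f)$. So the whole burden falls on
verifying total boundedness of~$R$ from the uniform continuity of~$f$.

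First I would unwind the constructive content of uniform continuity on
$[0,1]$: for each~$k\in\N$ there is a modulus, i.e. some~$m=m(k)\in\N$ such
that~$|x-x'|<2^{-m}$ implies~$|f(x)-f(x')|<2^{-k}$ for all~$x,x'\in[0,1]$.
Next, partition the interval into the finitely many sample points
$x_j:=j\,2^{-m}$ for~$j=0,1,\ldots,2^{m}$ (taking~$x_{2^m}=1$); this is a
genuinely finite, explicitly given list, so no choice principle beyond what is
already used in the excerpt is needed. Then for an arbitrary~$x\in[0,1]$ one
can effectively locate an index~$j$ with~$|x-x_j|<2^{-m}$ (e.g. round~$x\,2^m$
to a nearby integer), and uniform continuity gives~$|f(x)-f(x_j)|<2^{-k}$.
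Hence the finite set~$\{f(x_0),\ldots,f(x_{2^m})\}$ witnesses that~$R$ is
within~$2^{-k}$ of a finite set; since~$k$ was arbitrary, $R$ is totally
bounded. Applying Lemma~\ref{23} to~$X=R$ produces~$\sup R=\sup(f)$, as usual.

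The step I expect to be the main obstacle is not any hard estimate but the
constructive hygiene at the ``locate an index~$j$'' stage: classically one just
says ``pick~$j$ with~$x\in[x_j,x_{j+1}]$'', but constructively one must give an
algorithm that, from a Cauchy-real~$x$ and the explicit rational grid, returns
an integer~$j$ together with a proof that~$|x-x_j|<2^{-m}$. This is exactly the
kind of decidability-over-the-grid argument already invoked in the excerpt
(the trichotomy on effectively decidable predicates used in the~$\sqrt2$
example), so it goes through, but it is the one place where one must be careful
not to smuggle in LEM. Everything else --- the finiteness of the partition, the
triangle inequality, and the passage from ``totally bounded'' to ``has a
supremum'' --- is either routine or already supplied by Lemma~\ref{23}.
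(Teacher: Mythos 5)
Your proposal is correct and follows essentially the same route as the paper: exhibit an equispaced grid finer than the modulus of uniform continuity, observe that the images of the grid points witness total boundedness of $\{f(x):x\in[0,1]\}$, and invoke Lemma~\ref{23}. The ``constructive hygiene'' point you flag is real but harmless here (and is glossed over in the paper too): one never needs to decide which subinterval contains~$x$, only to produce \emph{some} index~$i$ with $|x-x_i|<2^{-m}$, and the overlap built into the strict inequality lets a sufficiently precise rational approximation of~$x$ do this without LEM.
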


\begin{proof}
Let~$\alpha$ be a modulus for~$f$, so that one has
\[
(\forall x, y \in [0,1]) \; (\forall k) \; \left( |x-y| < 2^{-\alpha
k} \implies |f(x)-f(y)| < 2^{-k} \right).
\]
Then the set~$\{ f(x): x\in [0,1] \}$ is totally bounded.  Indeed,
let~$n> 2^{\alpha k}$.  Let~$x_i:= \frac{i}{n}$ for~$i<n$.  Then by
uniform continuity, we obtain that~$(\forall x\in[0,1])\;(\exists
i<n)\left(|f(x)-f(x_i)|<2^{-k}\right)$, and therefore the set of
values is totally bounded.  The proof is completed by applying
Lemma~\ref{23}.
\end{proof}

The constructive impossibility of strengthening the conclusion of the
theorem is discussed in Section~\ref{four}.

\section{Counterexample to the existence of a maximum}
\label{four}
\label{25}

The conclusion of Theorem~\ref{24} cannot be strengthened to the
existence of a maximum, in the sense of the constructive formula
\[
(\exists x \in [0,1]) \; \left( f(x)=\sup(f) \right).
\]
Indeed, let~$a$ be any real such that~$a\leq 0$ or~$a\geq 0$ is
unknown.  Next, define~$f$ on~$[0,1]$ by setting~$f(x)=ax$.
Then~$\sup(f)$ is simply~$\max(0, a)$, but the point where it is
attained cannot be captured constructively (see~\cite[p.~295]{TV}).
To elaborate on the foundational status of this example, note that the
law of excluded middle:
\[
P \vee \neg P
\]
(``either~$P$ or (not~$P$)''), is the strongest principle rejected by
constructivists.  A weaker principle is the LPO (limited principle of
omniscience).  The LPO is the main target of Bishop's criticism in
\cite{Bi75}.  The LPO is formulated in terms of sequences, as the
principle that it is possible to search ``a sequence of integers to
see whether they all vanish'' \cite[p.~511]{Bi75}.  The LPO is
equivalent to the law of trichotomy:
\[
(a< 0) \vee (a=0) \vee (a> 0).
\]
An even weaker principle is~$(a\leq 0) \vee (a \geq 0)$.  The
existence of~$a$ satisfying the negation
\[
\neg \left( (a\leq 0) \vee (a \geq 0) \right)
\]
is exploited in the construction of the counterexample under
discussion.  The principle~$(a\leq 0) \vee (a \geq 0)$ is false
intuitionistically.  After discussing real numbers~$x\geq 0$ such that
it is ``{\em not\/}'' true that~$x>0$ or~$x=0$, Bishop writes:
\begin{quotation}
In much the same way we can construct a real number~$x$ such that it
is {\em not\/} true that~$x\geq 0$ or~$x\leq 0$ (Bishop 1967
\cite[p.~26]{Bi67}), (Bishop \& Bridges 1985 \cite[p.~28]{BB}).
\end{quotation}

The fact that an~$a$ satisfying~$\neg ((a\leq 0) \vee (a \geq 0))$
yields a counterexample~$f(x)=ax$ to the extreme value theorem (EVT)
on~$[0,1]$ is alluded to by Bishop in \cite[p.~59, exercise~9]{Bi67};
\cite[p.~62, exercise~11]{BB}.  Bridges interprets Bishop's italicized
``{\em not\/}'' as referring to a Brouwerian counterexample, and
asserts that trichotomy as well as the principle~$(a\leq 0) \vee (a
\geq 0)$ are independent of Bishopian constructivism.  See
Bridges~\cite{Br94} for details; a useful summary may be found in
Taylor~\cite{Ta}.

\section{Reuniting the antipodes}

The advantage of the constructive framework is the anchoring of the
distinction between what can be exhibited and what cannot, in the very
mathematical formalism.  Rather than being an afterthought that may or
may not trickle down to the students, the distinction is built into
the intuitionistic hardware.  Meanwhile, the level of detail required
to operate such machinery risks masking the simple perceptual insights
at the level of the cognitive underpinnings of the EVT.

Viewed as a {\em companion\/} to the classical approach, the
intuitionistic framework can usefully enhance constructive issues that
are otherwise relegated to footnotes, appendices, or optional material
in textbooks.  Meanwhile, viewed as an {\em alternative\/} to the
classical approach, the intuitionistic framework risks masking
important conceptual phenomena available in classical idealisations,
particularly in areas such as geometry and mathematical physics.
Minimal surfaces, geodesics, variational principles, etc., are
inextricably tied in with ever more sophisticated implementations of
the classical EVT, and rely upon the existence of the actual extremal
points rather than merely suprema.  Thus, the existence of the
Calabi-Yau manifolds, ubiquitous in both differential geometry and
mathematical physics, is nonconstructive; see Yau \& Nadis \cite{YN}.

More specifically, general relativity routinely exploits versions of
the extreme value theorem, in the form of the existence of solutions
to variational principles, such as geodesics, be it spacelike,
timelike, or lightlike.  At a deeper level, S.P.~Novikov \cite{No1,
No2} wrote about Hilbert's meaningful contribution to relativity
theory, in the form of discovering a Lagrangian for Einstein's
equation for spacetime.  Hilbert's deep insight was to show that
general relativity, too, can be written in Lagrangian form, which is a
satisfying conceptual insight.

A radical constructivist's reaction would be to dismiss the material
discussed in the previous paragraphs as relying on LEM (needed for the
EVT), hence lacking numerical meaning, and therefore meaningless.  In
short, radical constructivism (as opposed to the liberal variety)
adopts a theory of meaning amounting to an ostrich effect as far as
certain significant scientific insights are concerned.  A quarter
century ago, M.~Beeson already acknowledged constructivism's problem
with the calculus of variations in the following terms:
\begin{quote}
Calculus of variations is a vast and important field which lies right
on the frontier between constructive and non-constructive mathematics
(Beeson 1985 \cite[p.~22]{Bee}).
\end{quote}
An even more striking example is the Hawking-Penrose singularity
theorem explored by Hellman \cite{He98}, which relies on fixed point
theorems and therefore is also constructively unacceptable, at least
in its present form.  However, the singularity theorem does provide
important scientific insight.  Roughly speaking, one of the versions
of the theorem asserts that certain natural conditions on curvature
(that are arguably satisfied experimentally in the visible universe)
force the existence of a singularity when the solution is continued
backward in time, resulting in a kind of a theoretical justification
of the Big Bang.  Such an insight cannot be described as
``meaningless'' by any reasonable standard of meaning preceding
nominalist commitments; see (Katz \& Katz 2012 \cite{KK11a}) for more
details.

\section{Kronecker and constructivism}

Kronecker subdivided mathematics into three fields: analysis, algebra,
and number theory (or arithmetic).  In his 1861 inaugural speech at
the Academy of Science of Berlin, Kronecker said: 
\begin{quote}
The study of complex multiplication of elliptic functions leading to
works the object of which can be characterized as being drawn from
analysis, motivated by algebra and driven by number theory (see
Gauthier \cite[p.~39]{Ga02}).
\end{quote}
In his 1886 letter to Lipschitz, he declared that with the publication
of his 1882 work \emph{Grundz\"uge einer arithmetischen Theorie der
algebraischen Gr\"ossen}, he has found the long-sought foundations of
his theory of forms with the arithmetisation of algebra which had been
the goal of his mathematical life.


In his 1891 lectures \cite{Kro}, Kronecker criticized Bolzano for
having used the crudest means (\emph{mit den rohesten Mitteln}) in his
proof of the intermediate value theorem; see Gauthier (2009
\cite[p.~225]{Ga09}), (2013 \cite[p.~39]{Ga13}); Boniface \&
Schappacher \cite[p.~269-270]{BS}).


Thus, Kronecker did not consider geometry and mathematical physics as
part of mathematics.  Of the three fields of analysis, geometry, and
mathematical physics (see Boniface \& Schappacher \cite[p.~211]{BS}),
he argued in favor of a constructive arithmetisation of analysis
alone.%
\footnote{The meaning of the term ``arithmetisation'' has changed
since Kronecker introduced it, and today refers to the traditional
set-theoretic foundations of~$\R$ in particular and analysis in
general.}
This was to be a reformulation of analysis in terms of the natural
numbers rather than the speculative constructs such as~$\R$, as
envisioned by his contemporaries Cantor, Dedekind, and Weierstrass.
Kronecker plainly acknowledged that two-thirds of what is today
considered mathematics (namely, what he referred to as geometry and
physics) is not amenable to such arithmetisation.%
\footnote{H.~Edwards, a contemporary expert on Kronecker's thinking,
has done much to restore balance in the popular perception of
Kronecker (see e.g., \cite{Ed}), but may have overlooked the
three-fold partition and its ramifications for constructivisation.}
Radical constructivists attempting to enlist Kronecker to their cause
of constructivizing \emph{all} of modern mathematics may therefore be
``more vigorous than accurate'', to quote Robinson (1968 \cite{Ro68}).

As a transition to the remainder of this text, we note that classical
mathematics can be thought of as an extension of constructive
mathematics, inasmuch as the former uses more and stronger axioms than
the latter, although the verificationist interpretation of the
existence quantifier necessarily leads to a clash with the classical
viewpoint (see end of Section~\ref{four} for details).  Meanwhile,
analysis over the hyperreals can be done in the framework of the
standard Zermelo-Fraenkel axiom system with the axiom of choice.  The
key consequence of the latter is the existence of ultrafilters proved
by Tarski \cite{Tar} in 1930 (the use of ultrafilters is explained in
Subsection~\ref{ultrapower}).

An admirable attempt to bridge the gap between ``the constructive and
nonstandard views of the continuum'' resulted in a volume of
publications \cite{Sc}, but not immediately in a unity of purpose.  A
type of proof described as ``constructive modulo an ultrafilter''
proposed by Ross \cite{Ros01}, \cite{Ros06} represents an interesting
attempt at a dialogue.  Our theme here is that, even though the two
conceptual frameworks may not be compatible, both can yield useful
mathematical insight.

An intriguing proposal for bridging the gap specifically in the
context of the extreme value theorem was recently made by
Schuster~\cite{Sc10}.  Here one exploits the principle of unique
choice, alternatively called the principle of non-choice.  The
heuristic idea is that, if uniqueness is sufficiently ubiquitous
(e.g., ``if a continuous function on a complete metric space has
approximate roots and, in a uniform manner, at most one root''), then
existence follows, as well (``it actually has a root'') (see also
\cite{Sch06}).  There is a vast literature on the related uniqueness
paradigm.  Thus, in Kohlenbach \cite{Ko93}, the uniform notion of
uniqueness was introduced under the name ``modulus of (uniform)
uniqueness'' (see also his book \cite{Ko}).  Some of these ideas were
anticipated in a 1979 text by Kreinovich \cite{Kr79} (see also his
review \cite{Kr82}).  The theorem that if a computable function in
$C[0,1]$ attains its maximum at a unique point then that point is
computable, was already proved by Grzegorczyk~\cite{Grz} in 1955.
Further work includes Ishihara \cite{Is}; Berger and
Ishihara~\cite{Ber05}; Berger, Bridges, and Schuster \cite{Ber06};
Diener and Loeb~\cite{DL09}; Schwichtenberg~\cite{Schw};
Bridges~\cite{Br10}; and others.

\section{Infinitesimal clarity}
\label{infi}

Approximation issues stressed in the constructive approach are
important for both mathematics and its applications.  Similarly, it is
important for the student to realize that the tangent line is the
limit of secant lines, which gives geometric motivation for the idea
of derivative.

\subsection{Nominalistic reconstructions}

At the same time, it is important to realize that the Weierstrassian
$(\epsilon,\delta)$ approach tends to remove motion and geometry from
the definition of basic concepts of the calculus.  The game of ``you
tell me the epsilon, I will tell you the delta'' superficially
resembles approximation theory.  However, the crucial issue is the
functional dependence of $\delta$ on $\epsilon$, rather than any
specific approximation; no wonder engineering students, who are
certainly vitally concerned with approximation, are seldom taught the
$(\epsilon, \delta)$ method.  In reality dressing students to perform
multiple-quantifier epsilontic logical stunts on pretense of teaching
them infinitesimal calculus is merely a way of dressing up a bug to
look like a feature (to borrow a quip from computer science folklore),
as is apparent if one compares this approach to the lucidity of its
infinitesimal counterpart.

The Weierstrassian $(\epsilon,\delta)$ approach was necessitated by an
inability to justify the ontological material that naturally arose in
scientific inquiry during the 17th, 18th, and 19th centuries, namely
the infinitesimals, which were present at the conception of the theory
by Leibniz, Johann Bernoulli, and others.  For more details, see Bair
et al.~\cite{B11}; Bottazzi et al.~\cite{Bo14}; Mormann et
al.~\cite{MK}.

Starting in the 1870s, Cantor, Dedekind, and Weierstrass justified the
logical complications they introduced into the foundations, in terms
of their success in eliminating foundational material they were unable
to justify.  Cantor went as far as calling infinitesimals the
``cholera bacillus of mathematics", and published a paper purporting
to ``prove" that they are self-contradictory; see Ehrlich (2006
\cite{Eh06}) for more details.  Their work amounted to a nominalistic
reconstruction of analysis, by eliminating ontological material they
could not account for; see Katz \& Katz (2012 \cite{KK11a}) for more
details.  The success of their reconstruction resulted in a widespread
ad-hoc ontological commitment to an exclusive reality of the real
numbers (or their constructive analogues).

Such a reductive philosophical commitment has taken a toll on the
development of mathematics.  Thus, Cauchy's Dirac delta function and
its applications in Fourier analysis were forgotten for over a
century, because of the reductive ideology that eliminated
infinitesimals, without which Cauchy's applications of what would
later be called the Dirac delta function could not be sustained; see
Laugwitz~(1989 \cite{Lau89}); Katz \& Tall (2013 \cite{KT}).
Similarly, Cauchy was the one who invented our notion of continuity,
and he defined it in terms of infinitesimals.  To Cauchy, an
infinitesimal was generated by a null sequence.  This is related both
perceptually and formally to the construction of infinitesimals in the
ultrapower approach (see \cite{STB}).

An infinitesimal-enriched continuum offers a possibility of mimicking
more closely the perceptual analysis of the EVT, in constructing a
formal proof, due to the availability of a hierarchical number system,
with an Archimedean continuum (A-continuum for short) englobed inside
an infinitesimal-enriched Bernoullian continuum (B-continuum for
short).

\subsection{Klein on rivalry of continua}

Felix Klein described a rivalry of such continua in the following
terms.  Having outlined the developments in real analysis associated
with Weierstrass and his followers, Klein pointed out that
\begin{quote}
The scientific mathematics of today is built upon the series of
developments which we have been outlining.  But {\em an essentially
different conception of infinitesimal calculus has been running
parallel with this [conception] through the centuries\/} (Klein
\cite[p.~214]{Kl}) [emphasis added--authors].
\end{quote}
Such a different conception, according to Klein,
\begin{quote}
harks back to old metaphysical speculations concerning the {\em
structure of the continuum\/} according to which this was made up of
[...] infinitely small parts \cite[p.~214]{Kl} [emphasis
added---authors].
\end{quote}
Victor J. Katz (2014 \cite{Ka14}) appears to imply that a B-track
approach based on notions of infinitesimals or indivisibles is limited
to ``the work of Fermat, Newton, Leibniz and many others in the 17th
and 18th centuries''.  This does not appear to be Klein's view.  Klein
formulated a condition, in terms of the mean value theorem, for what
would qualify as a successful theory of infinitesimals, and concluded:
\begin{quote}
I will not say that progress in this direction is impossible, but it
is true that none of the investigators have achieved anything positive
(Klein 1908 \cite[p.~219]{Kl}).
\end{quote}
Klein was referring to the current work on infinitesimal-enriched
systems by Levi-Civita, Bettazzi, Stolz, and others.  In Klein's mind,
the infinitesimal track was very much a current research topic; see
Ehlrich (2006 \cite{Eh06}) for a detailed coverage of the work on
infinitesimals around~1900.

\begin{figure}
\[
\xymatrix@C=95pt{{} \ar@{-}[rr] \ar@{-}@<-0.5pt>[rr]
\ar@{-}@<0.5pt>[rr] & {} \ar@{->>}[d]^{\hbox{st}} & \hbox{\quad
B-continuum} \\ {} \ar@{-}[rr] & {} & \hbox{\quad A-continuum} }
\]
\caption{Taking standard part}
\label{31}
\end{figure}
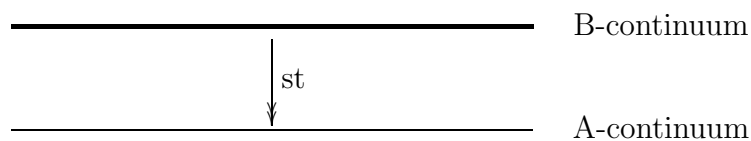

\subsection{Formalizing Leibniz}

Leibniz's approach to the differential quotient%
\footnote{See (Sherry 1987 \cite{She87}) and (Katz \& Sherry 2012
\cite{KS1}, \cite{KS2}).}
\[
\frac{d y}{d x}
\] 
(today called the derivative) was formalized by Robinson.  Here one
exploits a map called \emph{the standard part}, denoted~``st'', from
the finite part of a B-continuum, to the A-continuum, as illustrated
in Figure~\ref{31}.

In the context of the hyperreal extension of the real numbers, the map
st ``rounds off'' each finite hyperreal~$x$ to the nearest real
$x_0=\text{st}(x)\in \R$.  In other words, the map ``st'' collapses
the cluster of points infinitely close to a real number~$x_0$, back
to~$x_0$.  Note that both the term ``hyper-real field'', and an
ultrafilter construction thereof, are due to E.~Hewitt in 1948 (see
\cite[p.~74]{Hew}).  The transfer principle allowing one to extend
every first-order real statement to the hyperreals, is due to
J.~\Los{} in 1955 (see \cite{Lo}).  Thus, the Hewitt-\Los{} framework
allows one to work in a B-continuum satisfying the transfer principle.
See (Keisler 1994 \cite{Kei}) for a historical outline.

We will denote such a B-continuum by the new symbol~\RRR.  We will
also denote its finite (limited) part by
\[
\RRR_{<\infty} .
\]
The map ``st'' sends each finite point~$x\in \RRR$, to the real point
st$(x)\in \R$ infinitely close to~$x$:
\[
\xymatrix{\quad \RRR_{{<\infty}}^{~} \ar@{->>}[d]^{{\rm st}} \\ \R}
\]
We illustrate the construction by means of an infinite-resolution
microscope in Figure~\ref{tamar}.

\def\lineticktop#1#2{
  \draw[-] ({#1},3.075) node [anchor=south] {#2} -- (#1,2.925) ;
}
 
\def\linetickbot#1#2{
  \draw[-] ({#1},-0.075) node [anchor=north] {#2} -- (#1,0.075) ;
}
 
\def\microscope#1{
  \draw [thick,-         ] (#1 -0.25,2.15) -- (#1+0,3) -- (#1 +0.25,2.15) ;
  \draw [thick,fill=white] (#1 +0.00,1.50) circle (1) ;  
  \fill [white           ] (#1 -0.25,2.15) -- (#1+0,3) -- (#1 +0.25,2.15) ;
  \draw [thick,blue,->   ](#1-0.95,1.5) -- (#1+0.95,1.5) ;
}
 
\def\microtick#1#2#3#4#5#6{
  \draw [-,#5] (#1+0.375*#2,1.5+#3) node [anchor=#6,font=\small] {#4} -- (#1+0.375*#2,1.5-#3) ;
  \draw [red,thin] (#1+0.375*#2,1.3) to [out=-90,in=90] (#1+0.00,0.6);
}
 
\begin{figure}

\begin {tikzpicture} [scale=2]
  \draw[blue,thick,->]      (-1.1,3) -- (4.1,3) coordinate (x axis);
 
  \microscope    {1.41}
  \microtick     {1.41}{-0.2}{0.1}{$r$}{}{south}
  \microtick     {1.41}{ 0.7}{0.05}{\tiny $r+\beta$}{blue}{south}
  \microtick     {1.41}{ 2.1}{0.05}{\tiny $r+\gamma$}  {blue}{south}
  \microtick     {1.41}{-1.7}{0.05}{\tiny $r+\alpha$}    {blue}{south}
  \draw[red,->] (1.41,0.6) -- (1.41,0.325) node [anchor=west] {$\scriptstyle \operatorname{st}$} -- (1.41,0.15) ;
 
  \lineticktop{-1}{$-1$}
  \linetickbot{-1}{$-1$}
  \lineticktop{ 0}{$ 0$}
  \linetickbot{ 0}{$ 0$}
  \lineticktop{+1}{$ 1$}
  \linetickbot{+1}{$ 1$}
  \lineticktop{+2}{$ 2$}
  \linetickbot{+2}{$ 2$}
  \lineticktop{+3}{$ 3$}
  \linetickbot{+3}{$ 3$}
  \lineticktop{+4}{$ 4$}
  \linetickbot{+4}{$ 4$}
 
  \lineticktop{+1.41}{}
  \linetickbot{+1.41}{$ r$}
 
  \draw[->]      (-1.1,0) -- (4.1,0) coordinate (x axis);
\end {tikzpicture}

\caption{\textsf{The standard part function, st, ``rounds off" a
finite hyperreal to the nearest real number.  The function st is here
represented by a vertical projection.  An ``infinitesimal microscope"
is used to zoom in on an infinitesimal neighborhood of a standard real
number $r$, where $\alpha$, $\beta$, and~$\gamma$ represent typical
infinitesimals.  Courtesy of Wikipedia.}}
\label{tamar}
\end{figure}

Robinson defined the derivative as $\hbox{st} \left( \frac{\Delta
y}{\Delta x} \right)$, instead of~$\Delta y/\Delta x$.  For an
accessible exposition (see H.~J.~Keisler \cite{Ke, Kei}).%
\subsection{Ultrapower}
\label{ultrapower}

To elaborate on the ultrapower construction of the hyperreals,
let~$\Q^\N$ denote the space of sequences of rational numbers.
Let~$\left( \Q^\N \right)_C$ denote the subspace consisting of Cauchy
sequences.  The reals are by definition the quotient field~$\R:=
\left. \left( \Q^\N \right)_C \right/ \mathcal{F}_{\!n\!u\!l\!l}$,
where~$\mathcal{F}_{\!n\!u\!l\!l}$ contains all null sequences.
Meanwhile, the hyperreals can be obtained by forming the
quotient~$\RRR= \left.  \R^\N \right/ \mathcal{F}_{u}$, where a
sequence~$\langle u_n \rangle$ is in~$\mathcal{F}_{u}$ if and only if
the set~$\{ n \in \N : u_n = 0 \}$ is a member of a fixed ultrafilter.
To give an example, the
sequence~$\left\langle\tfrac{(-1)^n}{n}:n\in\N\right\rangle$
represents a nonzero infinitesimal, whose sign depends on whether or
not the set~$2\N$ is a member of the ultrafilter.  A helpful
``semicolon'' notation for presenting an extended decimal expansion of
a hyperreal was described by Lightstone~\cite{Li}.

\section{Hyperreal extreme value theorem}

We now return to the EVT.  The classical proof of the EVT usually
proceeds in two or more stages.  Typically, one first shows that the
function is bounded.  Then one proceeds to construct an extremum by
one or another procedure involving choices of sequences.  The
hyperreal approach is both more economical (there is no need to prove
boundedness first) and less technical.  To show that a continuous
function~$f(x)$ on~$[0,1]$ has a maximum, let
\[
H
\]
be an infinite hypernatural number (for instance, the one represented
by the sequence~$\langle 1,2,3,\ldots\rangle$ with respect to the
ultrapower construction outlined in Subsection~\ref{ultrapower}).  The
real interval~$[0, 1]$ has a natural hyperreal extension.  Consider
its partition into~$H$ subintervals of equal infinitesimal
length~$\tfrac{1}{H}$, with partition points~$x_i = \tfrac{i}{H}$
as~$i$ ``runs'' from~$0$ to~$H$.  The existence of such a partition
follows by the transfer principle (see more below) applied to the
first order formula
\[ 
(\forall n\in \N) \; (\forall x\in [0,1]) \; ( \exists i<n ) \; \left(
\tfrac{i}{n} \leq x < \tfrac{i+1}{n} \right).
\]
The function~$f$ is naturally extended to the hyperreals between~$0$
and~$1$.  Note that in the real setting (when the number of partition
points is finite), a point with the maximal value of~$f$ can always be
chosen among the partition points~$x_i$, by induction.  We have the
following first order property expressing the existence of a maximum
of~$f$ over a finite collection:
\[
(\forall n\in \N) \; (\exists i_0<n) \; ( \forall i<n ) \left(
f(\tfrac{i_0}{n}) \geq f(\tfrac{i}{n}) \right) .
\]
We now apply the transfer principle to obtain
\begin{equation}
\label{41}
(\forall H \in \NNN) \; ( \exists i_0<H ) \; ( \forall i<H ) \; \left(
f(\tfrac{i_0}{H}) \geq f(\tfrac{i}{H}) \right),
\end{equation}
where~$\NNN$ is the collection of hypernatural numbers.
Formula~\eqref{41} is true in particular for a particular infinite
hypernatural~$H\in \NNN \setminus\N$.  Thus, there is a
hypernatural~$i_0$ such that~$0\leq i_0\leq H$ and
\begin{equation}
\label{42}
f(x_{i_0})\geq f(x_i)
\end{equation}
for all~$i = 0, \ldots, H$.  Consider the real point~$c={\rm
st}(x_{i_0})$ where ``st'' is the standard part function.  By
continuity of~$f$, we have~$f(x_{i_0})\approx f(c)$, and
therefore
\[
{\rm st}(f(x_{i_0}))= f({\rm st} (x_{i_0}))=f(c).
\]
An arbitrary real point~$x$ lies in a suitable sub-interval of the
partition, namely~$x\in[x_i,x_{i+1}]$, so that st$(x_i)=x$.  Applying
``st'' to inequality~\eqref{42}, we obtain
\[
{\rm st}(f(x_{i_0}))\geq {\rm st}(f(x_i)).
\]
Hence~$f(c) \geq f(x)$, for all real~$x$, proving~$c$ to be a maximum
of~$f$.  Note that the argument follows closely our \emph{perceptual}
analysis in Section~\ref{three}.

\section{Approaches and invitations}
\label{nine}

Robinson's approach in (Robinson 1966 \cite{Ro66}) was formulated in
the framework of model theory of mathematical logic.

Two decades later, Lindstr\o{}m (1988 \cite{Li88}) presented an
alternative analytical approach in his text \emph{An invitation to
nonstandard analysis}, described as follows:
\begin{quote}
I have tried to make the subject look the way it would had it been
developed by analysts or topologists and not logicians.  This is the
explanation for certain unusual features such as my insistence on
working with ultrapower models and my willingness to downplay the
importance of first order languages (Lindstrom \cite[p.~1]{Li88}).
\end{quote}
In this approach, the proof of EVT, though essentially the same, is
even more elementary and short, because one does not need to use the
transfer principle.

\section{Conclusion}

The strength of the constructive approach is the ability to place into
sharp relief a hard-nosed analysis of what can be effectively
exhibited, and what cannot.  The strength of the infinitesimal
approach is its closer fit with the perceptual analysis of the
phenomenon at the heart of the EVT.  The complementarity of the
resulting insights ultimately points to a companionship, rather than a
rivalry, between the two approaches.  Such fruitful complementarity
persists inspite of possible formal incompatibilities of the
intuitionistic and the classical frameworks.


\begin{thebibliography}{AI}


\bibitem{B11} Bair, J.; B\l{}aszczyk, P.; Ely, R.; Henry, V.; Kanovei,
V.; Katz, K.; Katz, M.; Kutateladze, S.; McGaffey, T.; Schaps, D.;
Sherry, D.; Shnider, S.  Is mathematical history written by the
victors?  \emph{Notices of the American Mathematical Society}
\textbf{60} (2013) no.~7, 886-904.

See http://www.ams.org/notices/201307/rnoti-p886.pdf 

and http://arxiv.org/abs/1306.5973

\bibitem{B12} Bair, J.; B\l{}aszczyk, P.; Ely, R.; Henry, V.; Kanovei,
V.; Katz, K.; Katz, M.; Kutateladze, S.; McGaffey, T.; Schaps, D.;
Sherry, D.; Shnider, S.: Interpreting Euler's infinitesimal
mathematics (2014), in preparation.



\bibitem{Ba14} Bascelli, T.  Galileo's quanti: understanding
infinitesimal magnitudes.  \emph{Arch. Hist. Exact Sci.} \textbf{68}
(2014), no.~2, 121--136.

\bibitem{Bo14} Bascelli, T.; Bottazzi, E.; Herzberg, F.; Kanovei, V.;
Katz, K.; Katz, M.; Nowik, T.; Sherry, D.; Shnider, S.  Fermat,
Leibniz, Euler, and the gang: The true history of the concepts of
limit and shadow.  \emph{Notices of the American Mathematical Society}
(2014), to appear.


\bibitem{Bee} Beeson, M. Foundations of constructive mathematics.
Metamathematical studies.  Ergebnisse der Mathematik und ihrer
Grenzgebiete (3) [Results in Mathematics and Related Areas (3)], 6.
Springer-Verlag, Berlin, 1985.


\bibitem{Bel08} Bell, J.  A primer of infinitesimal analysis. Second
edition. Cambridge University Press, Cambridge, 2008.

\bibitem{Bel} Bell, J.  Continuity and infinitesimals.  Stanford
Encyclopedia of philosophy.  Revised 20 july 2009.

\bibitem{Be65} Benacerraf, P., What numbers could not be.
\emph{Philos. Rev.}  \textbf{74} (1965), 47--73.


\bibitem{Ber05} Berger, J.; Ishihara, H. Brouwer's fan theorem and
unique existence in constructive analysis. {\em MLQ Math. Log. Q.\/}
\textbf{51} (2005), no.~4, 360--364.


\bibitem{Ber06} Berger, J.; Bridges, D.; Schuster, P. The fan theorem
and unique existence of maxima.  {\em J. Symbolic Logic\/} \textbf{71}
(2006), no.~2, 713--720.



\bibitem{Bi67} Bishop, E. Foundations of constructive analysis.
McGraw-Hill Book Co., New York-Toronto, Ont.-London 1967.

\bibitem{Bi85} Bishop, E. Schizophrenia in contemporary mathematics
[published posthumously; originally distributed in 1973].  In Errett
Bishop: reflections on him and his research (San Diego, Calif., 1983),
1--32, {\em Contemp. Math.\/} \textbf{39}, Amer. Math. Soc.,
Providence, RI, 1985.


\bibitem{Bi75} Bishop, E. The crisis in contemporary mathematics.
Proceedings of the American Academy Workshop on the Evolution of
Modern Mathematics (Boston, Mass., 1974).  {\em Historia Math.}
\textbf{2} (1975), no.~4, 507-517.


\bibitem{BB} Bishop, E.; Bridges, D. Constructive analysis.
Grundlehren der Mathematischen Wissenschaften [Fundamental Principles
of Mathematical Sciences], 279. Springer-Verlag, Berlin, 1985.



\bibitem{BS} Boniface, J.; Schappacher, N. ``Sur le concept de nombre
en math\'ematique'': cours in\'edit de Leopold Kronecker \`a Berlin
(1891).  [``On the concept of number in mathematics'': Leopold
Kronecker's 1891 Berlin lectures] {\em Rev. Histoire Math.\/}
\textbf{7} (2001), no.~2, 206--275.


\bibitem{BK} Borovik, A.; Katz, M. Who gave you the
Cauchy--Weierstrass tale?  The dual history of rigorous calculus.
\emph{Foundations of Science} \textbf{17} (2012), no.~3, 245-276.  See
http://dx.doi.org/10.1007/s10699-011-9235-x

and http://arxiv.org/abs/1108.2885



\bibitem{Bot86} Bottazzini, U. The higher calculus: a history of real
and complex analysis from Euler to Weierstrass.  Translated from the
Italian by Warren Van Egmond.  Springer-Verlag, New York, 1986.

\bibitem{Br79} Bridges, D. Constructive functional analysis.  {\em
Research Notes in Mathematics\/} \textbf{28}. Pitman (Advanced
Publishing Program), Boston, Mass.-London, 1979.


\bibitem{Br94} Bridges, D. A constructive look at the real number
line.  In Real numbers, generalizations of the reals, and theories of
continua, 29--92, see item~\cite{Eh}.


\bibitem{Br10} Bridges, D. Continuity and Lipschitz constants for
projections.  {\em J. Log. Algebr. Program.\/} \textbf{79} (2010),
no.~1, 2--9.




\bibitem{Ca21} Cauchy, A. L. {\em Cours d'Analyse de L'Ecole Royale
Polytechnique.  Premi\`ere Partie.  Analyse alg\'ebrique\/} (Paris:
Imprim\'erie Royale, 1821).


\bibitem{Ko06} Kock, A.  Synthetic differential geometry.  Second
edition.  London Mathematical Society Lecture Note Series, 333.
Cambridge University Press, Cambridge, 2006.


\bibitem{Da77} Davis, M.  Applied nonstandard analysis.  Pure and
Applied Mathematics.  Wiley-Interscience [John Wiley \& Sons], New
York-London-Sydney, 1977.  Reprinted: Dover, NY, 2005, see \hfil
\break http://store.doverpublications.com/0486442292.html


\bibitem{DL09} Diener, H.; Loeb, I. Sequences of real functions
on~$[0,1]$ in constructive reverse mathematics.  {\em Ann. Pure
Appl. Logic\/} \textbf{157} (2009), no.~1, 50--61.






\bibitem{Ed} Edwards, H.  Kronecker's algorithmic mathematics.
\emph{Math. Intelligencer} \textbf{31} (2009), no.~2, 11--14.



\bibitem{Eh} Ehrlich, P. (Ed.)  Real numbers, generalizations of the
reals, and theories of continua.  Edited by Philip Ehrlich.  {\em
Synthese Library\/}, \textbf{242}.  Kluwer Academic Publishers Group,
Dordrecht, 1994.


\bibitem{Eh06} Ehrlich, P.  The rise of non-Archimedean mathematics
and the roots of a misconception. I. The emergence of non-Archimedean
systems of magnitudes.  \emph{Arch. Hist. Exact Sci.} \textbf{60}
(2006), no.~1, 1--121.



\bibitem{Fe00} Feferman, S.  Relationships between constructive,
predicative and classical systems of analysis.  Proof theory
(Roskilde, 1997), 221--236, Synthese Lib., 292, Kluwer Acad. Publ.,
Dordrecht, 2000.







\bibitem{Fr} Freudenthal, H. Cauchy, Augustin-Louis.  In Dictionary
of Scientific Biography, ed. by C. C. Gillispie, vol. 3 (New York:
Charles Scribner's sons, 1971), 131-148.

\bibitem{Ga02} Gauthier, Y.  Internal logic.  Foundations of
mathematics from Kronecker to Hilbert.  Synthese Library, 310. Kluwer
Academic Publishers Group, Dordrecht, 2002.


\bibitem{Ga09} Gauthier, Y.  Classical function theory and applied
proof theory.  \emph{Int. J. Pure Appl. Math.} \textbf{56} (2009),
no.~2, 223--233.


\bibitem{Ga13} Gauthier, Y.  Kronecker in contemporary mathematics.
General arithmetic as a foundational programme.  \emph{Reports on
mathematical logic} \textbf{48} (2013), 37--65.  See
http://dx.doi.org/10.4467/20842589RM.13.002.1254




\bibitem{Gi} Gispert-Chambaz, H. Camille Jordan et les fondements de
l'Analyse.  Publications Mathematiques d'Orsay 82-05.


\bibitem{Grz} Grzegorczyk, A. Computable functionals.  {\em
Fundamenta Mathematicae\/} \textbf{42} (1955), 168--202.

\bibitem{Gu} Guillaume, M. ``Review of Katz, M.; Sherry, D.  Leibniz's
infinitesimals: their fictionality, their modern implementations, and
their foes from Berkeley to Russell and beyond.  \emph{Erkenntnis}
\textbf{78} (2013), no.~3, 571--625.''  Mathematical Reviews (2014).
See http://www.ams.org/mathscinet-getitem?mr=3053644

\bibitem{HW} Hardy, G.; Wright, E.  An introduction to the theory of
numbers.  Sixth edition. Revised by D. R. Heath-Brown and
J. H. Silverman.  Oxford University Press, Oxford, 2008.


\bibitem{Ha82} Hatcher, W.  Calculus is algebra.
\emph{Amer. Math. Monthly} \textbf{89} (1982), no.~6, 362--370.




\bibitem{Ha} Havenel, J. Peirce's Clarifications of Continuity.
\emph{Transactions of the Charles S. Peirce Society: A Quarterly
Journal in American Philosophy} \textbf{44} (2008), no.~1, 86-133.


\bibitem{He98} Hellman, G. Mathematical constructivism in spacetime.
{\em British J. Philos. Sci.} \textbf{49} (1998), no.~3, 425--450.



\bibitem{Hew} Hewitt, E. Rings of real-valued continuous
functions. I.  {\em Trans. Amer. Math. Soc.\/} \textbf{64} (1948),
45--99.


\bibitem{Is} Ishihara, H. An omniscience principle, the K\"onig lemma
and the Hahn-Banach theorem.  \emph{Z. Math. Logik Grundlag. Math.}
\textbf{36} (1990), no.~3, 237--240.



\bibitem{Ka}
K{\"a}stner, Abraham Gotthelf (1760)
Anfangsgr{\"u}nde der Analysis endlicher Gr{\"o}\ss en.
Publisher: {{Witwe Vandenhoeck}}, {G{\"o}ttingen}.  See
http://gdz.sub.uni-goettingen.de/no\_cache/dms/load/img/?IDDOC=116776
%
%


\bibitem{KK11a} Katz, K.; Katz, M. A Burgessian critique of
nominalistic tendencies in contemporary mathematics and its
historiography.  \emph{Foundations of Science} \textbf{17} (2012),
no.~1, 51--89.  See http://dx.doi.org/10.1007/s10699-011-9223-1

and http://arxiv.org/abs/1104.0375

\bibitem{KSS13} Katz, M.; Schaps, D.; Shnider, S.  Almost Equal: The
Method of Adequality from Diophantus to Fermat and Beyond.
\emph{Perspectives on Science} \textbf{21} (2013), no.~3, 283-324.



\bibitem{KS2} Katz, M.; Sherry, D. Leibniz's laws of continuity and
homogeneity.  \emph{Notices of the American Mathematical Society}
\textbf{59} (2012), no.~11, 1550-1558.  See
http://www.ams.org/notices/201211/ and http://arxiv.org/abs/1211.7188


\bibitem{KS1} Katz, M.; Sherry, D. Leibniz's infinitesimals: Their
fictionality, their modern implementations, and their foes from
Berkeley to Russell and beyond.  \emph{Erkenntnis} \textbf{78} (2013),
no.~3, 571--625.  See http://dx.doi.org/10.1007/s10670-012-9370-y and
http://arxiv.org/abs/1205.0174


\bibitem{KT} Katz, M.; Tall, D.  A Cauchy-Dirac delta function.
\emph{Foundations of Science} \textbf{18} (2013), no.~1, 107--123.


\bibitem{Ka14} Katz, V.  ``Review of Bair et al., Is mathematical
history written by the victors?  \emph{Notices Amer. Math. Soc.}
\textbf{60} (2013), no.~7, 886--904.''  Mathematical Reviews (2014).
See http://www.ams.org/mathscinet-getitem?mr=3086638


\bibitem{Ke} Keisler, H. J. Elementary Calculus: An Infinitesimal
Approach.  Second Edition.  Prindle, Weber \& Schimidt, Boston, 1986.


\bibitem{Kei} Keisler, H. J. The hyperreal line. In Real numbers,
generalizations of the reals, and theories of continua, 207--237 (see
item Ehrlich 1994 \cite{Eh}).


\bibitem{Kl} Klein, F. Elementary Mathematics from an Advanced
Standpoint.  Vol. I.  Arithmetic, Algebra, Analysis.  Translation by
E. R. Hedrick and C. A. Noble [Macmillan, New York, 1932] from the
third German edition [Springer, Berlin, 1924].

\bibitem{Kn02} Knobloch, E.  Leibniz's rigorous foundation of
infinitesimal geometry by means of Riemannian sums. Foundations of the
formal sciences, 1 (Berlin, 1999).  \emph{Synthese} \textbf{133}
(2002), no.~1-2, 59--73.

\bibitem{Kn11} Knobloch, E.  Galileo and German thinkers: Leibniz.
Galileo and the Galilean school in universities in the seventeenth
century (Italian), 127--139, \emph{Studi
Cent. Interuniv. Stor. Univ. Ital.} \textbf{14}, CLUEB, Bologna, 2011.


\bibitem{Kn14} Knobloch, E.  ``Review of: Katz, M.; Schaps, D.;
Shnider, S.  Almost equal: the method of adequality from Diophantus to
Fermat and beyond.  \emph{Perspectives on Science} \textbf{21} (2013),
no.~3, 283--324.''  \emph{Mathematical Reviews} (2014).  See
http://www.ams.org/mathscinet-getitem?mr=3114421

\bibitem{Ko93} Kohlenbach, U. Effective moduli from ineffective
uniqueness proofs.  An unwinding of de La Valle\'e Poussin's proof for
Chebycheff approximation.  {\em Ann. Pure Appl. Logic\/} \textbf{64}
(1993), no.~1, 27--94.
%
%


\bibitem{Ko} Kohlenbach, U. Applied proof theory: proof
interpretations and their use in mathematics.  {\em Springer
Monographs in Mathematics\/}.  Springer-Verlag, Berlin, 2008.


\bibitem{Kr79} Kreinovich, V. Categories of space-time models
(Russian).  Ph.D. dissertation, Soviet Academy of Sciences, Siberian
Branch, Institute of Mathematics, 1979.


\bibitem{Kr82} Kreinovich, V. Review of D.S. Bridges, Constructive
functional analysis, Pitman, London 1979 (see item \cite{Br79} above).
Zbl 401:03027; Math Reviews 82k:03094.


\bibitem{Kro} Kronecker, L.  \"Uber den Begriff der Zahl in der
Mathematik.  1891 lecture.  First published in Boniface \& Schappacher
2001 \cite{BS}.

\bibitem{Lau89} Laugwitz, D. Definite values of infinite sums:
aspects of the foundations of infinitesimal analysis around 1820.
{\em Arch. Hist. Exact Sci.\/} \textbf{39} (1989), no.~3, 195--245.



\bibitem{Law} Lawvere, F.  Toward the description in a smooth topos of
the dynamically possible motions and deformations of a continuous
body.  Third Colloquium on Categories (Amiens, 1980), Part I.  {\em
Cahiers Topologie G\'eom. Diff\'erentielle\/} \textbf{21} (1980),
no.~4, 377--392.


\bibitem{Lich} Lichtenberg, G.  Aphorisms.  Translated by
R. J. Hollingdale.  Penguin Books, 1990.  [Book A is dated 1765-1770]

\bibitem{Li} Lightstone, A.  Infinitesimals.  {\em
Amer. Math. Monthly\/} \textbf{79} (1972), 242--251.

\bibitem{Li88} Lindstr\o{}m, T.  An invitation to nonstandard
analysis.  Nonstandard analysis and its applications (Hull, 1986),
1--105, \emph{London Math. Soc. Stud. Texts} \textbf{10}, Cambridge
Univ. Press, Cambridge, 1988.

\bibitem{Lo} \Los, J. Quelques remarques, th\'eor\`emes et
probl\`emes sur les classes d\'efinissables d'alg\`ebres, in
Mathematical interpretation of formal systems, {98--113},
North-Holland Publishing Co., {Amsterdam}, {1955}.
%
%

\bibitem{MK} Mormann, T.; Katz, M.  Infinitesimals as an issue of
neo-Kantian philosophy of science.  \emph{HOPOS: The Journal of the
International Society for the History of Philosophy of Science}
\textbf{3} (2013), no.~2, 236-280.  

See http://www.jstor.org/stable/10.1086/671348 

and http://arxiv.org/abs/1304.1027





\bibitem{No1} Novikov, S.  The second half of the 20th century and its
results: the crisis of the society of physicists and mathematicians in
Russia and in the West. (Russian) Istor.-Mat. Issled. (2) No. 7(42)
(2002), 326--356, 369.


\bibitem{No2} Novikov, S.  The second half of the 20th century and its
conclusion: crisis in the physics and mathematics community in Russia
and in the West.  Amer. Math. Soc. Transl. Ser. 2, 212, Geometry,
topology, and mathematical physics, 1--24, Amer. Math. Soc.,
Providence, RI, 2004.  (Translated from Istor.-Mat. Issled. (2)
No. 7(42) (2002), 326--356, 369; by A. Sossinsky.)



\bibitem{Pei} Peirce, C. S. Three Grades of Clearness.  In ``The
Logic of Relatives'' in {\em The Monist\/} \textbf{7} (1897),
pp. 161--217.

\bibitem{Re} Reeder, P.: Infinitesimals for Metaphysics: Consequences
for the Ontologies of Space and Time.  Degree Doctor of Philosophy,
Ohio State University, Philosophy, 2012.

\bibitem{Ro66} Robinson, A.  Non-standard analysis.  North-Holland
Publishing Co., Amsterdam, 1966.

\bibitem{Ro68} Robinson, A.  ``Reviews: Foundations of Constructive
Analysis''.  {\em Amer. Math. Monthly\/} \textbf{75} (1968), no.~8,
920--921.


\bibitem{Ros01} Ross, D.  The constructive content of nonstandard
measure existence proofs: is there any?  Pp.~229--239 in reference
Schuster et al.~\cite{Sc}.

\bibitem{Ros06} Ross, D.  A nonstandard proof of a lemma from
constructive measure theory.  {\em MLQ Math. Log. Q.\/} \textbf{52}
(2006), no.~5, 494--497.



\bibitem{Ru} Rust, H.  Operational Semantics for Timed Systems: A
Non-standard Approach to Uniform Modeling of Timed and Hybrid
Systems. {\em Lecture Notes in Computer Science\/} \textbf{3456}.
Springer, 2005.



\bibitem{STB} Sad, L.; Teixeira, M.; Baldino, R.  Cauchy and the
problem of point-wise convergence.  {\em Arch. Internat. Hist. Sci.\/}
\textbf{51} (2001), no.~147, 277--308.


\bibitem{SL} Schmieden, C.; Laugwitz, D.  Eine Erweiterung der
Infinitesimalrechnung. (German) {\em Math. Z.\/} \textbf{69} (1958),
1--39.



\bibitem{Sc} Schuster, P.; Berger, U.; Osswald, H., eds.  Reuniting
the antipodes---constructive and nonstandard views of the continuum.
Proceedings of the symposium held in Venice, May 16--22, 1999.
Synthese Library, 306.  Kluwer Academic Publishers, Dordrecht, 2001.


\bibitem{Sch06} Schuster, P.  Unique solutions.  {\em MLQ
Math. Log. Q.\/} \textbf{52} (2006), no.~6, 534--539 (with \textbf{53}
(2007), no.~2, 214).



\bibitem{Sc10} Schuster, P.  Problems, solutions, and completions.
{\em J. Log. Algebr. Program.\/} \textbf{79} (2010), no.~1, 84--91.


\bibitem{Schw} Schwichtenberg, H.  A direct proof of the equivalence
between Brouwer's fan theorem and K\"onig's lemma with a uniqueness
hypothesis.  {\em J. UCS\/} \textbf{11} (2005), no.~12, 2086--2095.



\bibitem{She87} Sherry, D.  The wake of Berkeley's Analyst: {\em rigor
mathematicae\/}? {\em Stud. Hist. Philos. Sci.\/} \textbf{18} (1987),
no.~4, 455--480.


\bibitem{Si} Sinaceur, H.  Cauchy et Bolzano.  {\em Rev. Histoire
Sci. Appl.\/} \textbf{26} (1973), no.~2, 97--112.


\bibitem{Str} Stroyan, K.  Uniform continuity and rates of growth of
meromorphic functions.  Contributions to non-standard analysis
(Sympos., Oberwolfach, 1970), pp. 47--64.  {\em Studies in Logic and
Foundations of Math.\/}, Vol. 69, North-Holland, Amsterdam, 1972.



\bibitem{Tar} Tarski, A.  Une contribution \`a la th\'eorie de la
mesure, {\em Fund. Math.\/} \textbf{15} (1930), 42-50.

\bibitem{Ta} Taylor, R. G.  Review of Real numbers, generalizations of
the reals, and theories of continua, edited by Philip Ehrlich [see
item \cite{Eh} above].  {\em Modern Logic\/} \textbf{8}, Number 1/2
(January 1998--April 2000), 195--212.


\bibitem{TV} Troelstra, A.; van Dalen, D.  Constructivism in
mathematics. Vol. I.  An introduction.  {\em Studies in Logic and the
Foundations of Mathematics\/}, \textbf{121}.  North-Holland Publishing
Co., Amsterdam, 1988.


\bibitem{Wa}Wattenberg, F.  Nonstandard analysis and constructivism?
\emph{Studia Logica} \textbf{47} (1988), no.~3, 303--309.



\bibitem{YN} Yau, S.-T.; Nadis, S.  The shape of inner space.  String
theory and the geometry of the universe's hidden dimensions.  Basic
Books, New York, 2010.


\end{thebibliography}
\end{document}